\documentclass{amsart}

\newtheorem{theorem}{Theorem}
\theoremstyle{plain}

\newtheorem{corollary}{Corollary}

\newtheorem{definition}{Definition}
\newtheorem{example}{Example}

\newtheorem{proposition}{Proposition}

\numberwithin{equation}{section}

\begin{document}
\title[HERMITE-HADAMARD TYPE INEQUALITY]{HERMITE-HADAMARD TYPE INEQUALITY FOR OPERATOR PREINVEX FUNCTIONS }
\author[A. G. Ghazanfari et al.]{A. G. Ghazanfari$^{1,*}$, M. Shakoori$^{2}$, A. Barani$^{3}$, S. S. Dragomir$^{4}$}
\thanks{*Corresponding author.\\
E-mail addresses:\\
ghazanfari.amir@gmail.com(A.G. Ghazanfari),
sever.dragomir@@vu.edu.au(S.S. Dragomir),
 alibarani2000@yahoo.com(A. Barani),
mahmoodshakoori@gmail.com (Mahmood Shakoori)}

\address{}
\email{} \maketitle {\small  \centerline{ \em $^{1,2,3}$Department of
Mathematics, Lorestan University}
\centerline{ \em P. O. Box 465, Khoramabad, Iran}
\centerline{ \em $^{4}$ School of Engineering and Science, Victoria University}
\centerline{ \em PO Box 14428 Melbourne City, MC 8001, Australia.}}

\begin{abstract}
In this paper we establish a Hermite-
Hadamard type inequality for operator preinvex
functions and an estimate of the right hand side of a Hermite-
Hadamard type inequality in which some operator preinvex
functions of selfadjoint operators in Hilbert spaces are involved.\\
Keywords:  Hermite-Hadamard inequality, invex sets, operator preinvex functions.
\end{abstract}
\section{introduction}

The following inequality holds for any convex function $f$ defined on $\mathbb{R}$ and $a,b \in \mathbb{R}$, with $a<b$
\begin{equation}\label{1.1}
f\left(\frac{a+b}{2}\right)\leq \frac{1}{b-a}\int_a^b f(x)dx\leq \frac{f(a)+f(b)}{2}
\end{equation}
Both inequalities hold in the reversed direction if $f$ is concave. We note that
Hermite-Hadamard's inequality may be regarded as a refinement of the concept of convexity and it follows easily from
Jensen's inequality.
The classical Hermite-Hadamard inequality provides estimates of the mean value
of a continuous convex function $f : [a, b] \rightarrow \mathbb{R}$.
Dragomir and Agarwal in \cite{dra1} presented some estimates of the right hand side of a Hermite-
Hadamard type inequality in which some convex functions are involved.
The main results of \cite{dra1} are given by the following theorems.
\begin{theorem}\label{t1}
Assume $a, b \in\mathbb{R}$ with $a < b$ and $f : [a, b] \rightarrow \mathbb{R}$ is a differentiable
function on $(a, b)$. If $|f'|$ is convex on $[a, b]$ then the following inequality holds
true
\begin{equation}\label{1.2}
\left|\frac{f(a)+f(b)}{2}-\frac{1}{b-a}\int_a^bf(x)dx\right|\leq\frac{(b-a)(|f'(a)|+|f'(b)|)}{8}.
\end{equation}
\end{theorem}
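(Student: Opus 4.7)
The plan is to reduce the inequality to an integral identity of Montgomery type and then apply the convexity hypothesis inside the integral. Specifically, I would first prove the auxiliary identity
\begin{equation*}
\frac{f(a)+f(b)}{2}-\frac{1}{b-a}\int_a^b f(x)\,dx=\frac{b-a}{2}\int_0^1 (1-2t)\,f'(ta+(1-t)b)\,dt,
\end{equation*}
which is obtained by a single integration by parts on the right hand side (taking $u=1-2t$, $dv=f'(ta+(1-t)b)\,dt$), followed by the linear change of variable $x=ta+(1-t)b$ to recognize the mean value of $f$ on $[a,b]$. The weight $1-2t$ appears naturally because it is the antiderivative chosen so that the boundary terms at $t=0$ and $t=1$ produce exactly $f(a)$ and $f(b)$ with the correct signs.

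Once the identity is in hand, I would take absolute values and apply the triangle inequality for integrals to obtain
\begin{equation*}
\left|\frac{f(a)+f(b)}{2}-\frac{1}{b-a}\int_a^bf(x)\,dx\right|\le\frac{b-a}{2}\int_0^1|1-2t|\,\bigl|f'(ta+(1-t)b)\bigr|\,dt.
\end{equation*}
The convexity of $|f'|$ on $[a,b]$ then gives the pointwise bound $|f'(ta+(1-t)b)|\le t|f'(a)|+(1-t)|f'(b)|$, reducing the problem to computing the two elementary moments $\int_0^1|1-2t|\,t\,dt$ and $\int_0^1|1-2t|(1-t)\,dt$. Both integrals equal $1/4$ (which is immediate from the symmetry $t\mapsto 1-t$, since $|1-2t|$ is invariant under this substitution and $\int_0^1|1-2t|\,dt=1/2$); assembling the pieces yields the constant $\tfrac18$ in the stated bound.

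The only genuine step requiring care is the integration by parts: one must pick the right parametrization of $[a,b]$ so that the dual contributions $(1-2\cdot 0)f(b)$ and $(1-2\cdot 1)f(a)$ combine (after dividing by $b-a$) into the average $\tfrac12(f(a)+f(b))$ with the correct sign relative to the mean integral. Everything else is routine: the triangle inequality, convexity of $|f'|$, and a symmetric moment computation. No finer structural argument is needed, and the constant $1/8$ is sharp in this approach because it is exactly the value of the $L^1$ inner product between the weight $|1-2t|/2$ and the endpoints' convex weights $t$ and $1-t$.
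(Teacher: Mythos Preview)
Your argument is correct and is precisely the original Dragomir--Agarwal proof: the Montgomery-type identity via integration by parts, the triangle inequality, the convexity bound on $|f'|$, and the evaluation of $\int_0^1|1-2t|\,t\,dt=\int_0^1|1-2t|(1-t)\,dt=\tfrac14$ all go through exactly as you describe. Note, however, that the present paper does not supply its own proof of this theorem; it is quoted in the introduction as a known result from \cite{dra1} and then invoked as a black box in the proof of Theorem~\ref{t4}, so there is nothing in the paper to compare your write-up against beyond the original reference, which your sketch faithfully reproduces.
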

In recent years several extensions and generalizations have been considered for
classical convexity. A significant generalization of convex functions is that of invex
functions introduced by Hanson in \cite{han}.

Let $X$ be a vector space, $ x, y \in X$, $ x\neq y$. Define the segment
\[[x, y] :=(1- t)x + ty; t \in [0, 1].\]
We consider the function $f : [x, y]\rightarrow \mathbb{R}$ and the associated function
\begin{align*}
&g(x, y) : [0, 1] \rightarrow \mathbb{R},\\
&g(x, y)(t) := f((1 - t)x + ty), t \in [0, 1].
\end{align*}
Note that f is convex on $[x, y]$ if and only if $g(x, y)$ is convex on $[0, 1]$.
For any convex function defined on a segment $[x, y] \in X$, we have the Hermite-
Hadamard integral inequality (see \cite[p.2]{dra2} and \cite[p.2]{dra3})

\begin{equation}\label{1.3}
f\left(\frac{x+y}{2}\right)\leq \int_0^1 f((1-t)x+ty)dt\leq \frac{f(x)+f(y)}{2},
\end{equation}

which can be derived from the classical Hermite-Hadamard inequality (1.1) for the
convex function $g(x, y) : [0; 1] \rightarrow \mathbb{R}$.

Motivated by the above results we investigate in this paper the operator version
of the Hermite-Hadamard inequality for operator preinvex functions and operator convex functions.

In order to do that we need the following preliminary definitions and results.
Let $A$ be a bounded self adjoint linear operator on a complex Hilbert space $(H; \langle .,.\rangle)$.
The Gelfand map establishes a $*$-isometrically isomorphism $\Phi$ between the set
$C (Sp (A))$ of all continuous functions defined on the spectrum of $A$, denoted $Sp (A)$,
and the $C^*$-algebra $C^*(A)$ generated by $A$  and the identity operator $1_H$ on $H$ as
follows (see for instance \cite[p.3]{fur}).
For any $f, g\in C (Sp (A))$ and any $\alpha,\beta\in\mathbb{C}$ we have
\begin{align*}
&(i)~\Phi(\alpha f+\beta g)=\alpha\Phi(f)+\beta\Phi(g);\\
&(ii)~\Phi(fg)=\Phi(f)\Phi(g)~\text{and}~\Phi(f^*)=\Phi(f)^*;\\
&(iii)~\|\Phi(f)\|=\|f\|:=\sup_{t\in Sp(A)} |f(t)|;\\
&(iv)~\Phi(f_0)=1 ~\text{and}~ \Phi(f_1)=A,~\text{ where } f_0(t)=1~ \text{and } f_1(t)=t, \text{for}~ t\in Sp(A).
\end{align*}
With this notation we define
\[ f (A) := \Phi(f) \text{ for all } f \in C (Sp (A))\]
and we call it the continuous functional calculus for a bounded selfadjoint operator $A$.
If $A$ is a bounded selfadjoint operator and $f$ is a real valued continuous function on $Sp (A)$,
then $f (t) \geq 0$ for any $t \in Sp (A)$ implies that $f (A) \geq 0$, i.e., $f (A)$ is a positive
operator on $H$. Moreover, if both $f$ and $g$ are real valued functions on $Sp (A)$ then
the following important property holds:
\begin{align*}
&(P) &&f (t) \geq g (t) \text{ for any } t \in Sp (A) \text{ implies that } f (A)\geq g (A)
\end{align*}
in the operator order in $B(H)$.

A real valued continuous function $f$ on an interval $I$ is said to be operator convex
(operator concave) if
\begin{align*}
&(OC)\quad &&f ((1 -\lambda)A +\lambda B)\leq (\geq) (1-\lambda) f (A) + \lambda f (B)
\end{align*}
in the operator order in $B(H)$, for all $ \lambda\in [0, 1] $ and for every bounded selfadjoint operators $A$ and $B$
in $B(H)$ whose spectra are contained in $I$.

Dragomir in \cite{dra4} has proved a Hermite-Hadamard type inequality for operator convex function as follows:

\begin{theorem}\label{t2}
Let $f : I \rightarrow \mathbb{R}$ be an operator convex function on the interval $I$. Then
for any selfadjoint operators $A$ and $B$ with spectra in $I$ we have the inequality
\begin{multline*}
\left(f\left(\frac{A+B}{2}\right)\leq\right)\frac{1}{2}\left[f\left(\frac{3A+B}{4}\right)+f\left(\frac{A+3B}{4}\right)\right]\\
\leq\int_0^1 f((1-t)A+tB))dt\\
\leq\frac{1}{2}\left[f\left(\frac{A+B}{2}\right)+\frac{f(A)+f(B)}{2}\right]\left(\leq\frac{f(A)+f(B)}{2}\right).
\end{multline*}
\end{theorem}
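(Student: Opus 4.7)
The plan is to reduce the four–term chain to two applications of the scalar/operator Hermite–Hadamard inequality (1.3), one on each half of the integration interval, and to recover the two ``parenthetical'' outer inequalities from direct applications of operator convexity $(OC)$ at the midpoint.

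First I would establish an operator version of (1.3): for $f$ operator convex on $I$ and selfadjoint $X,Y$ with spectra in $I$,
\begin{equation*}
f\!\left(\frac{X+Y}{2}\right)\leq\int_0^1 f((1-t)X+tY)\,dt\leq\frac{f(X)+f(Y)}{2}.
\end{equation*}
The right inequality follows by integrating $(OC)$ in $t$ (the Bochner integral preserves the operator order). For the left inequality, the substitution $t\mapsto 1-t$ shows the integral is unchanged if $X,Y$ are swapped, so I would rewrite it as $\tfrac{1}{2}\int_0^1[f((1-t)X+tY)+f(tX+(1-t)Y)]\,dt$ and apply $(OC)$ pointwise with $\lambda=\tfrac12$ to the integrand.

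Next I would split $\int_0^1 f((1-t)A+tB)\,dt=\int_0^{1/2}+\int_{1/2}^1$ and use the affine changes of variable $s=2t$ and $s=2t-1$ to rewrite these as
\begin{equation*}
\tfrac{1}{2}\!\int_0^1 f\!\left((1-s)A+s\tfrac{A+B}{2}\right)ds \quad\text{and}\quad \tfrac{1}{2}\!\int_0^1 f\!\left((1-s)\tfrac{A+B}{2}+sB\right)ds,
\end{equation*}
respectively. Applying the operator Hermite–Hadamard inequality from the previous paragraph to each of these, with endpoints $\{A,\tfrac{A+B}{2}\}$ (midpoint $\tfrac{3A+B}{4}$) and $\{\tfrac{A+B}{2},B\}$ (midpoint $\tfrac{A+3B}{4}$), and summing the two resulting double inequalities yields the central chain
\begin{equation*}
\tfrac{1}{2}\!\left[f\!\left(\tfrac{3A+B}{4}\right)+f\!\left(\tfrac{A+3B}{4}\right)\right]\leq\int_0^1 f((1-t)A+tB)\,dt\leq \tfrac{1}{2}\!\left[f\!\left(\tfrac{A+B}{2}\right)+\tfrac{f(A)+f(B)}{2}\right].
\end{equation*}
Finally, the leftmost parenthetical inequality is $(OC)$ applied with $\lambda=\tfrac12$ to the two selfadjoint operators $\tfrac{3A+B}{4}$ and $\tfrac{A+3B}{4}$ (whose arithmetic mean is $\tfrac{A+B}{2}$), and the rightmost parenthetical inequality is just $f\!\left(\tfrac{A+B}{2}\right)\leq\tfrac{f(A)+f(B)}{2}$, again immediate from $(OC)$.

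I expect the only subtle point to be the rigorous justification of the operator Hermite–Hadamard step, where I need the preservation of the operator order under Bochner integration of a continuous $B(H)$-valued function, together with the symmetry trick $t\leftrightarrow 1-t$ to obtain the midpoint lower bound. Everything else is bookkeeping: affine changes of variable (which commute with the functional calculus because the argument is already an affine combination of selfadjoint operators) and two applications of $(OC)$ at the midpoint.
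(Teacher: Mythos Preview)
Your proof is correct. Structurally it matches the paper's argument (the paper derives Theorem~\ref{t2} by specializing Theorem~\ref{t3} to $\eta(B,A)=B-A$): both split the integration interval at $t=\tfrac12$ and apply a Hermite--Hadamard inequality on each half, then sum. The difference is the level at which the Hermite--Hadamard step is carried out. The paper passes to the scalar function $\varphi_{x,A,B}(t)=\langle f((1-t)A+tB)x,x\rangle$ for a fixed unit vector $x$, invokes Proposition~\ref{p1} to conclude that $\varphi_{x,A,B}$ is convex on $[0,1]$, applies the \emph{scalar} Hermite--Hadamard inequality on $[0,\tfrac12]$ and $[\tfrac12,1]$, and only at the end lifts back to operator inequalities by quantifying over all unit $x$. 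You instead establish the \emph{operator} Hermite--Hadamard inequality first (via integration of $(OC)$ and the $t\leftrightarrow 1-t$ symmetry) and apply it directly to the pairs $(A,\tfrac{A+B}{2})$ and $(\tfrac{A+B}{2},B)$ after an affine change of variable. Your route avoids Proposition~\ref{p1} and stays at the operator level throughout; the paper's route has the advantage that, once $\varphi_{x,A,B}$ is known to be convex, everything is classical real analysis. The two are essentially equivalent.
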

In this paper we show that Theorem \ref{t3} holds for operator preinvex functions and establish an estimate of the right
hand side of a Hermite-
Hadamard type inequality in which some operator preinvex
functions of selfadjoint operators in Hilbert spaces are involved.

\section{operator preinvex functions}

\begin{definition}\label{d1}
Let $X$ be a real vector space, a set $S\subseteq X$ is said to be invex with respect
to the map $\eta:S\times S\rightarrow X$, if for every $x,y\in S$
and $t\in[0,1]$,
\begin{equation}\label {2.1}
y+t\eta(x,y)\in S.
\end{equation}
\end{definition}
It is obvious that every convex set is invex with respect
to the map $\eta(x,y)=x-y$, but there exist invex sets which are not convex (see \cite{ant}).

Let $S\subseteq X$ be an invex set with respect to $\eta:S\times S\rightarrow X$.
For every $x,y\in S$ the $\eta-$path $P_{xv}$ joining the points $x$ and $v : =x+\eta(y,x)$  is defined as follows
\[
P_{xv} : = \{z: z = x+t\eta(y,x) : t \in [ 0, 1 ] \}.
\]
The mapping $\eta$ is said to be satisfies the condition  $C$ if for every  $x,y\in S$ and $t \in[0,1]$,

\begin{align*}
(C)\quad\quad \eta(y,y+t\eta(x,y))&=-t\eta(x,y),\\
\eta(x,y+t\eta(x,y))&=(1-t)\eta(x,y).
\end{align*}

Note that for every $x,y\in S$ and every $t_{1},t_{2}\in [ 0, 1 ]$ from condition $C$ we have
\begin{equation}\label{2.2}
\eta(y+t_{2}\eta(x,y),y+t_{1}\eta(x,y))=(t_{2}-t_{1})\eta(x,y),
\end{equation}
see \cite{moh, yan} for details.

Let $\mathcal{A}$ be a $C^*$-algebra, denote by $\mathcal{A}_{sa}$ the set of all self adjoint elements in $\mathcal{A}$.

\begin{definition}\label{d2}
Let $S\subseteq B(H)_{sa}$ be an invex set with respect to $\eta:S\times S\rightarrow B(H)_{sa}$. Then, the continuous function $f:\mathbb{R}\rightarrow {\Bbb R}$ is said to be operator preinvex with respect to $\eta$ on $S$, if for every $A,B\in S$ and $t\in[0,1]$,
\begin{equation}\label {2.3}
f(A+t\eta(B,A))\leq (1-t)f(A)+tf(B).
\end{equation}
in the operator order in $B(H)$.
\end{definition}
Every operator convex function is an operator preinvex with respect
to the map $\eta(A,B)=A-B$ but the converse does not holds (see the following example).

Now, we give an example of some operator preinvex functions and invex sets with respect to the maps $\eta$ which satisfy the conditions (C).

\begin{example}\label{e1}\rm{
\begin{enumerate}
\item[(a)] Suppose that $1_H$ is the identity operator on a Hilbert space $H$, and
\begin{align*}
T:&=(-3\times1_H,-1\times1_H)=\{A\in B(H)_{sa}: -3\times1_H< A< -1\times1_H\}\\
U:&=(1_H,4\times1_H)=\{A\in B(H)_{sa}: 1_H< A< 4\times1_H\}\\
S:&=T\cup U\subseteq B(H)_{sa}.
\end{align*}
 Suppose that the function $\eta_1:S\times S\rightarrow B(H)_{sa}$ is defined by
 \[
\eta_1(A,B) =
\begin{cases}
A-B & \text{ \( A,B\in U \),}\\
A-B & \text{ \( A,B\in T \),}\\
1_H-B & \text{ \( A\in T, B\in U\),}\\
-1_H-B & \text{ \( A\in U, B\in T\)}.
\end{cases}
\]
Clearly $\eta_1$ satisfies condition $C$ and $S$ is an invex set with respect to $\eta_1$.
The real function $f(t)=t^2$ is preinvex with respect to $\eta_1$ on $S$.
 But the real function $g(t)=a+bt,\quad a,b\in \mathbb{R}$ is not
preinvex with respect to $\eta_1$ on $S$.
\item[(b)] Suppose that  $V:=(-2\times1_H,0),~ W:=(0,2\times1_H),~ S:=V\cup W\subseteq B(H)_{sa}$ and the
function $\eta_2:S\times S\rightarrow B(H)_{sa}$ is defined by
 \[
\eta_2(A,B) =
\begin{cases}
A-B & \text{ \( A,B\in V \text{ or }A,B\in W\),}\\
0   & \text{ otherwise }.
\end{cases}
\]
Clearly $\eta_{2}$ satisfies condition $C$ and $S$ is an invex set with respect to $\eta_2$.
The constant functions $f(t)=a,~a\in \mathbb{R}$ is only preinvex functions with respect to $\eta_2$ on $S$.

\item[(c)] The function $f(t) = -|t|$ is not a convex function, but it is a preinvex function with
respect to $\eta_3$, where
\[
\eta_3(A,B) =
\begin{cases}
A-B & \text{ \( A,B\geq 0 \text{ or }A,B\leq 0\),}\\
B-A   & \text{ otherwise }.
\end{cases}
\]

\end{enumerate}
 }
\end{example}

\begin{proposition}\label{p1}
Let $S\subseteq B(H)_{sa}$ be an invex set with respect to $\eta:S\times S\rightarrow B(H)_{sa}$ and $f:\mathbb{R}\rightarrow \Bbb R$ be a continuous function. Suppose that $\eta$ satisfies condition $C$ on $S$. Then for every $A,B\in S$ and $V=A+\eta(B,A)$ the function $f$ is operator preinvex with respect to $\eta$ on $\eta-$path $P_{AV}$ if and only if the function $\varphi_{x,A,B}:[0,1]\rightarrow \Bbb R$ defined by
\begin{equation}\label{2.4}
\varphi_{x,A,B}(t):=\langle f(A+t\eta(B,A))x,x\rangle
\end{equation}
is convex on $[0,1]$ for every $x\in H$ with $\|x\|=1$.
\end{proposition}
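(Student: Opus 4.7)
The plan is to transfer the operator inequality defining preinvexity to a scalar inequality via compression with a unit vector, using the standard fact that for selfadjoint operators $T_1, T_2$ we have $T_1 \leq T_2$ in the operator order if and only if $\langle T_1 x, x\rangle \leq \langle T_2 x, x\rangle$ for every $x \in H$ with $\|x\| = 1$. Since $f$ is real-valued and continuous and the arguments $A + t\eta(B,A)$ are selfadjoint, the functional calculus produces selfadjoint operators, so this criterion applies on both sides of the preinvexity inequality.

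For the forward direction, I would pick $t_1, t_2 \in [0,1]$ and $\lambda \in [0,1]$, set $C := A + t_1\eta(B,A)$ and $D := A + t_2\eta(B,A)$, which both lie on the $\eta$-path $P_{AV}$. The key computation invokes \eqref{2.2}, which gives $\eta(D,C) = (t_2 - t_1)\eta(B,A)$, and therefore
\[
C + \lambda\eta(D,C) = A + \bigl((1-\lambda)t_1 + \lambda t_2\bigr)\eta(B,A),
\]
which is again on $P_{AV}$ (so the path is invex). Applying operator preinvexity on $P_{AV}$ to $C$ and $D$ and pairing with a unit vector $x$ yields
\[
\varphi_{x,A,B}\bigl((1-\lambda)t_1 + \lambda t_2\bigr) \leq (1-\lambda)\varphi_{x,A,B}(t_1) + \lambda \varphi_{x,A,B}(t_2),
\]
which is the convexity of $\varphi_{x,A,B}$.

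For the converse, I would reverse the argument: take $C, D \in P_{AV}$ written as $C = A + t_1\eta(B,A)$ and $D = A + t_2\eta(B,A)$, and $s \in [0,1]$. Using \eqref{2.2} again, $C + s\eta(D,C) = A + \bigl((1-s)t_1 + s t_2\bigr)\eta(B,A)$. Convexity of $\varphi_{x,A,B}$ at $t_1, t_2$ with parameter $s$ gives
\[
\langle f\bigl(C + s\eta(D,C)\bigr)x, x\rangle \leq (1-s)\langle f(C)x, x\rangle + s\langle f(D)x, x\rangle
\]
for every unit vector $x$, and the selfadjointness criterion recalled above promotes this to the operator inequality $f(C+s\eta(D,C)) \leq (1-s)f(C) + sf(D)$, i.e.\ operator preinvexity on $P_{AV}$.

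The only real delicacy is the algebraic identity $\eta(D,C) = (t_2 - t_1)\eta(B,A)$, which is precisely what condition $(C)$ was formulated to guarantee via \eqref{2.2}; once that is in hand, everything reduces to the elementary passage between operator inequalities and numerical inequalities obtained by testing against unit vectors, together with the standard one-variable definition of convexity. No other obstacle is expected.
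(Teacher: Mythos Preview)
Your proposal is correct and follows essentially the same argument as the paper: both directions hinge on the identity \eqref{2.2} from condition $(C)$ to rewrite $C+\lambda\eta(D,C)$ as $A+\bigl((1-\lambda)t_1+\lambda t_2\bigr)\eta(B,A)$, after which the equivalence between the operator inequality \eqref{2.3} and the scalar inequalities $\langle\cdot\,x,x\rangle$ for all unit $x$ carries out the translation between operator preinvexity on $P_{AV}$ and ordinary convexity of $\varphi_{x,A,B}$. Aside from the order in which you present the two implications, your write-up matches the paper's proof.
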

\begin{proof}
Suppose that $x\in H$ with $\|x\|=1$ and $\varphi_{x,A,B}$ is convex on $[0,1]$ and $C_{1}:=A+t_{1}\eta(B,A)\in P_{AV}, C_{2}:=A+t_{2}\eta(B,A)\in P_{AV}$. Fix $\lambda \in [0,1]$.
By (\ref{2.4}) we have
\begin{equation}\label {2.5}
\begin{aligned}
\langle f(C_{1}+\lambda\eta(C_{2},C_{1}))x,x\rangle&=\langle f(A+((1-\lambda) t_{1}+\lambda t_{2})\eta(B,A))x,x\rangle\\
&=\varphi_{x,A,B}((1-\lambda) t_{1}+\lambda t_{2})\\
&\leq (1-\lambda)\varphi_{x,A,B}(t_{1})+\lambda\varphi_{x,A,B}(t_{2})\\
&=(1-\lambda) \langle f(C_{1})x,x\rangle+\lambda\langle f(C_{2})x,x\rangle.
\end{aligned}
\end{equation}
Hence, $f$ is operator preinvex with respect to $\eta$ on $\eta-$path $P_{AV}$.

Conversely, let  $A,B\in S$ and the function $f$ be operator preinvex with respect to $\eta$ on $\eta-$path $P_{AV}$. Suppose that $t_{1},t_{2}\in [0,1]$. Then, for every $\lambda \in [0,1]$  and $x\in H$ with $\|x\|=1$ we have
\begin{align}\label {2.6}
\varphi_{x,A,B}((1-\lambda) t_{1}+\lambda t_{2})&=\langle f(A+((1-\lambda) t_{1}+\lambda t_{2})\eta(B,A))x,x\rangle\notag\\
&=\langle f\left(A+ t_{1}\eta(B,A)+\lambda\eta( A+ t_{2}\eta(B,A),A+ t_{1}\eta(B,A)\right)x,x\rangle\notag\\
&\leq \lambda \langle f(A+ t_{2}\eta(B,A))x,x\rangle+(1-\lambda)\langle f(A+ t_{1}\eta(B,A))x,x\rangle\\
&=\lambda\varphi_{x,A,B}(t_{2})+(1-\lambda)\varphi_{x,A,B}(t_{1}).\notag
\end{align}
Therefore, $\varphi_{x,A,B}$ is convex on $[0,1]$.
\end{proof}

\begin{theorem}\label{t3}
Let $S\subseteq B(H)_{sa}$ be an invex set with respect to $\eta:S\times S\rightarrow B(H)_{sa}$ and $\eta$ satisfies condition $C$. If for every $A,B\in S$ and $V=A+\eta(B,A)$ the function $f : I \rightarrow \mathbb{R}$ is operator preinvex with respect to $\eta$ on $\eta-$path $P_{AV}$ with spectra of $A$ and spectra of $V$ in the interval $I$. Then we have the inequality
\begin{align}\label{2.7}
f\left(\frac{A+V}{2}\right)&\leq \frac{1}{2}\left[ f\left(\frac{3A+V}{4}\right)+f\left(\frac{A+3V}{4}\right)\right]\notag\\
&\leq \int_0^1 f(A+t\eta(B,A))dt\\
&\leq\frac{1}{2}\left[ f\left(\frac{A+V}{2}\right)+\frac{f(A)+f(V)}{2}\right]\leq \frac{f(A)+f(B)}{2}\notag.
\end{align}
\end{theorem}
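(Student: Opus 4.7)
The plan is to reduce the operator chain (\ref{2.7}) to a scalar chain by testing against unit vectors, using Proposition~\ref{p1} to convert operator preinvexity on the $\eta$-path $P_{AV}$ into scalar convexity of $\varphi_{x,A,B}$, and then invoking the classical refined Hermite--Hadamard inequality on $[0,1]$ for $\varphi_{x,A,B}$.

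First, I would fix a unit vector $x\in H$ and set $\varphi(t):=\varphi_{x,A,B}(t)=\langle f(A+t\eta(B,A))x,x\rangle$. By Proposition~\ref{p1}, $\varphi$ is convex on $[0,1]$. The classical refined scalar Hermite--Hadamard inequality (obtained by splitting $[0,1]$ into $[0,\tfrac12]$ and $[\tfrac12,1]$ and applying (\ref{1.1}) on each half together with midpoint convexity) then yields
\begin{align*}
\varphi(1/2)&\leq \tfrac12[\varphi(1/4)+\varphi(3/4)]\leq \int_0^1\varphi(t)\,dt\\
&\leq \tfrac12\bigl[\varphi(1/2)+\tfrac{\varphi(0)+\varphi(1)}{2}\bigr].
\end{align*}

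Next, I would identify each sample of $\varphi$ with an inner product: since $V=A+\eta(B,A)$, we have $\varphi(0)=\langle f(A)x,x\rangle$, $\varphi(1)=\langle f(V)x,x\rangle$, $\varphi(1/4)=\langle f(\frac{3A+V}{4})x,x\rangle$, $\varphi(1/2)=\langle f(\frac{A+V}{2})x,x\rangle$, $\varphi(3/4)=\langle f(\frac{A+3V}{4})x,x\rangle$, and by continuity the Bochner integral commutes with the inner product, so $\int_0^1\varphi(t)\,dt=\langle (\int_0^1 f(A+t\eta(B,A))\,dt)x,x\rangle$. Since $x$ is an arbitrary unit vector in $H$, these scalar inequalities lift to the first four operator inequalities of (\ref{2.7}). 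For the final estimate, I would invoke the preinvexity inequality (\ref{2.3}) at $t=\tfrac12$ to obtain $f(\frac{A+V}{2})\leq \tfrac12 f(A)+\tfrac12 f(B)$, and at $t=1$ to obtain $f(V)\leq f(B)$; substituting both into $\tfrac12[f(\frac{A+V}{2})+\frac{f(A)+f(V)}{2}]$ closes the chain.

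The main obstacle is conceptual rather than technical: the key mechanism is Proposition~\ref{p1}, which captures exactly why the scalar refined Hermite--Hadamard inequality transfers to this noncommutative setting even though $f$ is not assumed operator preinvex on all of $S$ but only along a single $\eta$-path. The remaining passage between pointwise-in-$x$ scalar inequalities and operator inequalities, and the interchange of the Bochner integral with the continuous linear functional $\langle\cdot\,x,x\rangle$, are standard once $\varphi$ has been shown to be convex.
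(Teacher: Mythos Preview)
Your proposal is correct and follows essentially the same route as the paper: reduce to the scalar function $\varphi_{x,A,B}$ via Proposition~\ref{p1}, apply the refined Hermite--Hadamard inequality on $[0,\tfrac12]$ and $[\tfrac12,1]$, and lift back by varying the unit vector $x$. The only organizational difference is that the paper first derives the coarse bounds $f(\tfrac{A+V}{2})\le\int_0^1 f(A+t\eta(B,A))\,dt\le\tfrac{f(A)+f(B)}{2}$ directly at the operator level (using condition $C$ and preinvexity, as in (\ref{2.9})--(\ref{2.10})) before invoking Proposition~\ref{p1} for the refinement, whereas you route all four inner inequalities through Proposition~\ref{p1} at once and handle only the final bound $\le\tfrac{f(A)+f(B)}{2}$ by direct preinvexity; both are valid and the key mechanism is identical.
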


\begin{proof}
For $x\in H$ with $\|x\|=1$ and $t\in [0,1]$, we have
\begin{equation}\label{2.8}
\langle (A+t\eta(B,A))x,x\rangle =\langle Ax,x\rangle+t\langle \eta(B,A)x,x\rangle \in I,
\end{equation}
since $\langle Ax,x\rangle\in Sp(A)\subseteq I$ and $\langle Vx,x\rangle\in Sp(V)\subseteq I$.

Continuity of $f$ and (\ref{2.8}) imply that the operator valued integral $\int_0^1 f(A+t\eta(B,A))dt$
exists. Since $\eta$ satisfied condition $C$, therefore for every $t\in[0,1] $ we have
\begin{align}\label{2.9}
A+\frac{1}{2}\eta(B,A)=A+t\eta(B,A)+\frac{1}{2}\eta(A+(1-t)\eta(B,A), A+t\eta(B,A)).
\end{align}
Preinvexity $f$ with respect to $\eta$ implies that
\begin{align}\label{2.10}
f(A+\frac{1}{2}\eta(B,A))&\leq \frac{1}{2}f(A+t\eta(B,A))+\frac{1}{2}f(A+(1-t)\eta(B,A))\notag\\
&\leq \frac{1}{2}[(1-t)f(A)+tf(B)]+\frac{1}{2}[tf(A)+(1-t)f(B)]\\
&\leq \frac{f(A)+f(B)}{2}\notag.
\end{align}
Integrating the inequality (\ref{2.10}) over $t\in [0, 1]$ and taking into account
that
\begin{equation}\label{2.11}
\int_0^1f(A+t\eta(B,A))dt=\int_0^1f(A+(1-t)\eta(B,A))dt
\end{equation}
then we deduce the Hermite-Hadamard inequality for operator preinvex functions
\begin{align*}
f\left(\frac{A+(A+\eta(B,A))}{2}\right)\leq \int_0^1 f(A+t\eta(B,A))dt\leq \frac{f(A)+f(B)}{2}.
\end{align*}
that holds for any selfadjoint operators $A$ and $B$ with the spectra in $I$.
Define the real-valued function $\varphi_{x,A,B}:[0,1]\rightarrow \mathbb{R}$ given by
$\varphi_{x,A,B}(t)=\langle f(A+t\eta(B,A))x,x\rangle$. Since $f$ is operator preinvex,
by the previous proposition \ref {p1}, $\varphi_{x,A,B}$ is a convex function on $[0,1]$.
Utilizing the Hermite-Hadamard inequality for real-valued convex
functions
\[
\varphi\left(\frac{a+b}{2}\right)\leq\frac{1}{b-a}\int_a^b \varphi(s)ds\leq\frac{\varphi(a)+\varphi(b)}{2}
\]
with $a=0,b=\frac{1}{2}$ we have
\[
\left\langle f\left(\frac{3A+V}{4}\right)x,x\right\rangle\leq 2\int_0^\frac{1}{2} \varphi_{x,A,B}(t)dt\leq\left\langle\frac{f(A)+f\left(\frac{A+V}{2}\right)}{2}x,x\right\rangle
\]
and with $a=\frac{1}{2},b=1$ we have
\[
\left\langle f\left(\frac{A+3V}{4}\right)x,x\right\rangle\leq 2\int_\frac{1}{2}^1 \varphi_{x,A,B}(t)dt\leq\left\langle\frac{f(V)+f\left(\frac{A+V}{2}\right)}{2}x,x\right\rangle
\]
which by summation and division by two produces
\begin{align*}
\left\langle\frac{1}{2}\left[ f\left(\frac{3A+V}{4}\right)+f\left(\frac{A+3V}{4}\right)\right]x,x\right\rangle
&\leq\int_0^1 \langle f(A+t\eta(B,A))x,x\rangle dt\\
&\leq\left\langle\frac{1}{2}\left[ f\left(\frac{A+V}{2}\right)+\frac{f(A)+f(V)}{2}\right]x,x\right\rangle
\end{align*}
Finally, from the continuity of the function $f$ we have
\[
\int_0^1 \langle f(A+t\eta(B,A))x,x\rangle dt=\left\langle\int_0^1 f(A+t\eta(B,A))dtx,x\right\rangle,
\]
and the inequality (\ref{2.9}) implies that
\[
f\left(\frac{A+V}{2}\right)\leq \frac{1}{2}\left[ f\left(\frac{3A+V}{4}\right)+f\left(\frac{A+3V}{4}\right)\right]\leq\frac{f(A)+f(B)}{2}.
\]
Hence we deduce the desired result (\ref{2.7}).

\end{proof}

A simple consequence of the above theorem is that the integral is closer
to the left bound than to the right, namely we can state:
\begin{corollary}\label{c1}
With the assumptions in Theorem \ref{t3} we have the inequality
\begin{align*}
0\leq \int_0^1 f(A+t\eta(B,A))dt-f\left(\frac{A+V}{2}\right)\leq \frac{f(A)+f(B)}{2}-\int_0^1 f(A+t\eta(B,A))dt.
\end{align*}
\end{corollary}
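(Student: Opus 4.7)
The plan is to derive both inequalities as purely algebraic rearrangements of the chain already established in Theorem \ref{t3}; no new integration or convexity argument is required. The left inequality $0 \le \int_0^1 f(A+t\eta(B,A))\,dt - f\!\left(\frac{A+V}{2}\right)$ is just the difference of the first and third entries of the chain (\ref{2.7}), so it is immediate.

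For the right inequality, I would first rewrite it in the equivalent form
\[
2\int_0^1 f(A+t\eta(B,A))\,dt \;\le\; f\!\left(\frac{A+V}{2}\right) + \frac{f(A)+f(B)}{2}.
\]
Theorem \ref{t3} already supplies
\[
\int_0^1 f(A+t\eta(B,A))\,dt \;\le\; \frac{1}{2}\!\left[f\!\left(\frac{A+V}{2}\right) + \frac{f(A)+f(V)}{2}\right],
\]
so after doubling, it suffices to verify $f(V) \le f(B)$. This is operator preinvexity (\ref{2.3}) at the endpoint $t=1$: with $V=A+\eta(B,A)$, inequality (\ref{2.3}) gives $f(V) = f(A+1\cdot\eta(B,A)) \le 0\cdot f(A)+1\cdot f(B) = f(B)$, whence $\tfrac{f(A)+f(V)}{2}\le \tfrac{f(A)+f(B)}{2}$ in the operator order, and the desired bound follows.

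There is essentially no obstacle here; the only point to keep in mind is that all manipulations must respect the operator order on $B(H)_{sa}$. Each step used above, namely scalar multiplication by nonnegative reals, addition of selfadjoint operators, and the endpoint evaluation of preinvexity, preserves that order, so the bound transfers verbatim and the corollary follows.
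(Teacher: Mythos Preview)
Your argument is correct and matches the paper's intent: the paper offers no separate proof of the corollary, merely stating that it is ``a simple consequence'' of Theorem~\ref{t3}, and your derivation makes this explicit by rearranging the chain~(\ref{2.7}) and invoking $f(V)\le f(B)$ from preinvexity at $t=1$.
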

\begin{example}
Let $S,~f,~\eta_1$ be as in Example \ref{e1}, then we have
\begin{multline*}
\left(\frac{A+V}{2}\right)^2\leq \frac{1}{2}\left[ \left(\frac{3A+V}{4}\right)^2+\left(\frac{A+3V}{4}\right)^2\right]\\
\leq \int_0^1 (A+t\eta_1(B,A))^2dt\\
\leq \frac{1}{2}\left[ \left(\frac{A+V}{2}\right)^2+\frac{A^2+V^2}{2}\right]\leq \frac{A^2+B^2}{2},
\end{multline*}
for every $A,B\in S$ and $V=A+\eta_1(B,A)$.
\end{example}

The following Theorem is a generalization of Theorem 3.1 in \cite{bar}.
\begin{theorem}\label{t4}
Let the function $f:I\rightarrow \mathbb{R^+}$ is continuous, $S\subseteq B(H)_{sa}$ be an open invex set with respect to $\eta:S\times S\rightarrow B(H)_{sa}$ and $\eta$ satisfies condition $C$. If
for every $A,B\in S$ and $V=A+\eta(B,A)$ the function $f$  is operator preinvex with respect to $\eta$ on $\eta-$path $P_{AV}$ with spectra of $A$ and spectra of $V$ in $I$. Then, for every $a,b\in (0,1)$ with $a<b$ and every $x\in H$ with $\|x\|=1$ the following inequality holds,
\begin{multline}\label{2.12}
\left|\frac{1}{2}\left\langle\int_0^a f(A+s\eta(B,A))ds~x,x\right\rangle +\frac{1}{2}\left\langle\int_0^b f(A+s\eta(B,A))ds~x,x\right\rangle\right. \\
\left. -\frac{1}{b-a}\int_a^b\left\langle\int_0^t f(A+s\eta(B,A))ds~x,x\right\rangle dt\right|\\
\leq \frac{b-a}{8} \{\langle f(A+a\eta(B,A))x,x\rangle+ \langle f(A+b\eta(B,A))x,x\rangle\}.
\end{multline}
Moreover we have
\begin{multline}\label{2.13}
\left\|\frac{1}{2}\int_0^a f(A+s\eta(B,A))ds +\frac{1}{2}\int_0^b f(A+s\eta(B,A))ds\right. \\
\left. -\frac{1}{b-a}\int_a^b\int_0^t f(A+s\eta(B,A))ds dt \right\|\\
\leq \frac{b-a}{8} \| f(A+a\eta(B,A))+  f(A+b\eta(B,A))\|\\
\leq \frac{b-a}{8}[~ \| f(A+a\eta(B,A))\|+\|f(A+b\eta(B,A))\|~].
\end{multline}
\end{theorem}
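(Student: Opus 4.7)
The plan is to reduce inequality \eqref{2.12} to the scalar Dragomir--Agarwal inequality \thmref{t1} applied to a suitably chosen antiderivative, and then to pass from \eqref{2.12} to \eqref{2.13} by exploiting self-adjointness.

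First I would fix $x\in H$ with $\|x\|=1$ and introduce
\[
\varphi_{x,A,B}(t):=\langle f(A+t\eta(B,A))x,x\rangle,\qquad F(t):=\int_0^t \varphi_{x,A,B}(s)\,ds.
\]
By \lemref{p1}, operator preinvexity of $f$ on the $\eta$-path $P_{AV}$ makes $\varphi_{x,A,B}$ convex on $[0,1]$. Because $f$ takes values in $\mathbb{R}^{+}$, the operator $f(A+s\eta(B,A))$ is positive, so $\varphi_{x,A,B}\geq 0$ and hence $F$ is differentiable on $(0,1)$ with $F'(t)=\varphi_{x,A,B}(t)\geq 0$. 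Therefore $|F'|=F'$ is convex on $[a,b]\subset(0,1)$, i.e.\ $F$ satisfies the hypotheses of \thmref{t1}.

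Next I would apply \thmref{t1} to $F$ on $[a,b]$. The resulting inequality reads
\[
\left|\frac{F(a)+F(b)}{2}-\frac{1}{b-a}\int_a^b F(t)\,dt\right|\leq \frac{(b-a)(F'(a)+F'(b))}{8},
\]
which, after unwinding the definitions of $F$ and $\varphi_{x,A,B}$ and using the commutation $\int_0^1\langle f(A+s\eta(B,A))x,x\rangle\,ds=\langle\int_0^1 f(A+s\eta(B,A))\,ds\,x,x\rangle$ coming from the continuity of $f$ (as was already used in the proof of \thmref{t3}), is precisely \eqref{2.12}.

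Finally, for \eqref{2.13}, I would observe that the operator
\[
C:=\frac{1}{2}\int_0^a f(A+s\eta(B,A))\,ds+\frac{1}{2}\int_0^b f(A+s\eta(B,A))\,ds-\frac{1}{b-a}\int_a^b\int_0^t f(A+s\eta(B,A))\,ds\,dt
\]
is self-adjoint, because $f$ is real-valued and each factor $f(A+s\eta(B,A))$ is self-adjoint. For self-adjoint operators one has $\|C\|=\sup_{\|x\|=1}|\langle Cx,x\rangle|$, so taking the supremum over unit vectors on both sides of \eqref{2.12} yields the first inequality of \eqref{2.13}; the second inequality is just the triangle inequality for the operator norm.

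The only step requiring any care is the legitimacy of applying \thmref{t1}: one must check that $F$ is genuinely differentiable on an open interval containing $[a,b]$ with convex $|F'|$. This is the reason for assuming $S$ is open and $a,b\in(0,1)$, and it is the step where operator preinvexity (through \lemref{p1}) and the positivity assumption $f:I\to\mathbb{R}^{+}$ are both needed. Once that is in place, the rest is bookkeeping.
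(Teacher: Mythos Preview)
Your proposal is correct and follows essentially the same route as the paper: define the antiderivative $F(t)=\int_0^t\langle f(A+s\eta(B,A))x,x\rangle\,ds$, use Proposition~\ref{p1} and positivity of $f$ to see that $|F'|=F'$ is convex, apply Theorem~\ref{t1} on $[a,b]$ to obtain \eqref{2.12}, and then take the supremum over unit vectors to get \eqref{2.13}. Your explicit invocation of self-adjointness of $C$ and the identity $\|C\|=\sup_{\|x\|=1}|\langle Cx,x\rangle|$ is a helpful clarification of why the passage to \eqref{2.13} is legitimate, but otherwise the argument matches the paper's.
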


\begin{proof}
Let $A,B\in S$ and $a,b\in (0,1)$ with $a<b$. For $x\in H$ with $\|x\|=1$ we define the function $\varphi:[0,1]\rightarrow{\Bbb R}^{+}$ by

\[
\varphi(t):=\left\langle \int_0^tf(A+s\eta(B,A))ds~x,x\right\rangle.
\]
Utilizing the continuity of the function $f$, the continuity property of
the inner product and the properties of the integral of operator-valued functions
we have
\[
\left\langle \int_0^t f(A+s\eta(B,A))ds~x,x\right\rangle=\int_0^t\left\langle f(A+s\eta(B,A))~x,x\right\rangle ds.
\]
Since $f(A+s\eta(B,A))\geq 0$, therefore $\varphi(t)\geq 0$ for all $t\in I$.
Obviously for every  $t\in(0,1)$ we have
\[
\varphi^{\prime}(t)=\langle f(A+t\eta(B,A))x,x\rangle\geq0,
\]
hence, $|\varphi^{\prime}(t)|=\varphi^{\prime}(t)$. Since $f$ is operator preinvex with respect to $\eta$ on $\eta-$path $P_{AV}$, by Proposition \ref{p1} the function $\varphi^{\prime}$ is convex. Applying Theorem \ref{t1} to the function $\varphi$ implies that
\[
\left| \frac{\varphi(a)+\varphi(b)}{2}-\frac{1}{b-a}\int_a^b\varphi(s)ds\right|\leq\frac{(b-a)\left(\varphi^{\prime}(a)+\varphi^{\prime}(b)\right)}{8},
\]
and we deduce that (\ref{2.12}) holds.
Taking supremum  over both side of inequality (\ref {2.12}) for all $x$ with $\|x\|=1$,  we deduce that the inequality (\ref{2.13}) holds.
\end{proof}

\section{Application for operator convex functions}
If we consider $\eta(B,A)=B-A$ in Theorem \ref{t3} then $f : I \rightarrow \mathbb{R}$ will be an operator convex function and $V=B$. Hence we can conclude Theorem \ref{t2} as a result of Theorem \ref{t3}.

As an application of Theorem \ref{t4} we state the following Theorem, which is a generalization of Theorem 2.1 in \cite{bar}.
\begin{theorem}\label{t5}
Let $f : I \rightarrow \mathbb{R^+}$ be an operator convex function on the interval $I$. Then for any selfadjoint operators $A$ and $B$ with spectra in $I$ and $a,b\in (0,1)$ with $a<b$ the following inequality holds,
\begin{multline}\label{3.1}
\left|\frac{1}{2}\left\langle\int_0^a f((1-s)A+sB)ds~x,x\right\rangle+\frac{1}{2}\left\langle\int_0^b
 f((1-s)A+sB)ds~x,x\right\rangle\right. \\
\left. -\frac{1}{b-a}\int_a^b\left\langle\int_0^t f((1-s)A+sB)ds~x,x\right\rangle dt\right|\\
\leq\frac{b-a}{8}\left[\left\langle f((1-a)A+a B)x,x\right\rangle+\left\langle f((1-b)A+b B)x,x\right\rangle\right].
\end{multline}

Moreover we have
\begin{multline}\label{3.2}
\left\|\frac{1}{2}\int_0^a f((1-s)A+sB))ds +\frac{1}{2}\int_0^b f((1-s)A+sB))ds\right. \\
\left. -\frac{1}{b-a}\int_a^b\int_0^t f((1-s)A+sB))ds dt \right\|\\
\leq \frac{b-a}{8} \| f((1-a)A+aB))+  f((1-b)A+bB))\|\\
\leq \frac{b-a}{8}[~ \| f((1-a)A+aB))\|+\|f((1-b)A+bB))\|~].
\end{multline}

\end{theorem}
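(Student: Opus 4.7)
The strategy is to obtain Theorem \ref{t5} as the specialization of Theorem \ref{t4} to the map $\eta(B,A) := B - A$, for which operator convexity reduces to operator preinvexity. First I would verify that this $\eta$ satisfies condition $C$: a direct computation gives $\eta(Y, Y + t\eta(X,Y)) = Y - (Y + t(X-Y)) = -t\eta(X,Y)$ and $\eta(X, Y + t\eta(X,Y)) = (1-t)\eta(X,Y)$, so both identities defining $C$ hold. For the ambient set $S$ required by Theorem \ref{t4}, I would take any open convex subset of $B(H)_{sa}$ that contains $A$ and $B$ and whose elements all have spectrum in $I$ (for instance, the set of selfadjoint operators with spectrum in the interior of $I$ — this is open, and convex sets are automatically invex with respect to $B-A$).

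Next I would invoke the remark after Definition \ref{d2} that every operator convex function is operator preinvex with respect to $\eta(B,A) = B-A$; continuity of $f$ is automatic from operator convexity on an interval, and nonnegativity is part of the hypothesis of Theorem \ref{t5}. Under this specialization, one computes $V = A + \eta(B,A) = B$ and $A + s\eta(B,A) = (1-s)A + sB$, so the $\eta$-path $P_{AV}$ is the usual operator segment from $A$ to $B$, and the condition ``spectra of $A$ and $V$ lie in $I$'' becomes simply the hypothesis that the spectra of $A$ and $B$ lie in $I$.

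Finally, I would substitute these identifications term-by-term into inequalities (\ref{2.12}) and (\ref{2.13}) of Theorem \ref{t4} to read off (\ref{3.1}) and (\ref{3.2}) respectively. I do not expect any real obstacle: the argument is essentially a bookkeeping check that every hypothesis of Theorem \ref{t4} is satisfied and that every quantity appearing there collapses to its convex counterpart. The only mildly delicate point is ensuring that the selected $S$ is invex and that the $\eta$-path $P_{AV}$ stays inside the operator interval corresponding to $I$; but this is immediate from the fact that, for selfadjoint $A, B$ with spectra in $I$, the operator $(1-s)A + sB$ is selfadjoint with $\langle((1-s)A + sB)x,x\rangle \in I$ for every unit vector $x$, hence its spectrum lies in $I$ as well.
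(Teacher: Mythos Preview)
Your proposal is correct and matches the paper's own approach: the paper presents Theorem \ref{t5} explicitly as an application of Theorem \ref{t4} obtained by taking $\eta(B,A)=B-A$, so that $V=B$ and operator preinvexity reduces to operator convexity. Your verification of condition $C$, choice of $S$, and term-by-term substitution are exactly the bookkeeping the paper leaves implicit.
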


\end{document}